\newcommand{\sD}{\mathcal{D}}
\newcommand{\sS}{\mathcal{S}}
\newcommand{\sT}{\mathcal{T}}
\newcommand{\R}{\mathbb{R}}
\newcommand{\E}{\mathbb{E}}
\newcommand{\Z}{\mathbb{Z}}
\newcommand{\bv}[1]{\mathbf{#1}}
\newcommand{\policy}[2]{\pi^{#2}(#1)}
\newcommand{\abs}[1]{\left\vert #1 \right\vert}
\newcommand{\KL}{\text{KL}}
\newcommand{\DKL}{\mathcal{D}_{\KL}}
\newtheorem{Remark}{Remark}
\newtheorem{Lemma}{Lemma}
\newtheorem{Assumption}{Assumption}
\newtheorem{Proposition}{Proposition}
\newtheorem{Problem}{Problem}
\title{\LARGE \bf
Optimal Decision-Making for Autonomous Agents via Data Composition \\
}
\author{\'Emiland Garrab\'e, Martina Lamberti and Giovanni Russo$^\ast$
\thanks{$^\ast$ emails: {\tt\small \{egarrabe,giovarusso\}@unisa.it}, {\tt\small martinalamberti3@gmail.com}. E. Garrab\'e and G. Russo with the DIEM at the University of Salerno, 84084, Salerno, Italy.} 
}
\begin{document}

\maketitle
\thispagestyle{empty}
\pagestyle{empty}

\begin{abstract}
We consider the problem of designing agents able to compute optimal decisions by composing data from multiple sources to tackle tasks involving: (i) tracking a desired behavior while minimizing an agent-specific cost; (ii) satisfying safety constraints. After formulating the control problem, we show that this is convex under a suitable assumption and find the optimal solution. The effectiveness of the results, which are turned in an algorithm, is illustrated on a {\em connected cars} application via in-silico and in-vivo experiments with real vehicles and drivers. All the experiments confirm our theoretical predictions and the deployment of the algorithm on a real vehicle shows its suitability for in-car operation.
\end{abstract}


\section{Introduction}

We often make decisions by composing knowledge gathered from {\em others}  \cite{doi:10.1126/science.aab3050} \textcolor{black}{and} a challenge transversal to control and learning is to devise mechanisms allowing autonomous decision-makers to emulate these abilities. Systems based on sharing data \cite{sharing_eco} are  examples where agents need to make decisions based on some form of crowdsourcing \cite{russo2020crowdsourcing}. Similar mechanisms can also be useful for the data-driven control paradigm when e.g., one needs to re-use policies synthesized on plants for which data are available to solve a control task on a new plant, for which data are  scarcely available \cite{russo2020crowdsourcing,designof,https://doi.org/10.48550/arxiv.2211.05536}.

Motivated by this, we consider the problem of designing decision-making mechanisms that enable autonomous agents to compute optimal decisions by composing information from third parties to solve tasks that involve: (i) tracking a desired behavior while minimizing an agent-specific cost; (ii) satisfying  safety constraints. Our results enable computation of the optimal behavior and are turned into an algorithm. This is  experimentally validated on a {\em connected car} application.

\subsubsection*{Related works} we briefly survey a number of works related to the results and methodological framework of this paper. The design of context-aware switches between multiple datasets for autonomous agents has been recently considered in  \cite{russo2020crowdsourcing,designof}, where the design problem,  formalized as a data-driven control (DDC) problem, did not take into account safety requirements. Results in DDC include \cite{8795639,behav2,8933093}, which take a behavioral systems 
perspective, \cite{9763859}, which finds data-driven formulas towards a model-free theory of geometric control. We also recall e.g., \cite{8039204,DBLP:conf/l4dc/WabersichZ20,9109670} for results inspired from MPC, \cite{https://doi.org/10.48550/arxiv.2211.05536} that considers data-driven control policies transfer and \cite{8703172} that tackles he problem of computing data-driven minimum-energy control for linear systems. In our control problem (see Section \ref{sec:control_problem}) we formalize the tracking of a given behavior via Kullback-Leibler (KL) divergence minimization and we refer to e.g., \cite{GARRABE202281,9029512} for examples across learning and control that involve minimizing this functional. Further, the study of mechanisms enabling agents to re-use data, also arises in the design of prediction algorithms from experts \cite{books/daglib/0016248} and of learning algorithms from multiple simulators \cite{7106543}. In a yet broader context, studies in neuroscience \cite{Mountcastle97} hint that our neocortex might implement a mechanism composing the output of the cortical columns and this might be the basis of our ability to re-use knowledge.


\emph{Contributions:} we consider the problem of designing agents that dynamically combine data from heterogeneous sources to fulfill tasks that involve tracking a target behavior while optimizing a cost and satisfying safety requirements expressed as box constraints. By leveraging a probabilistic framework, we formulate a data-driven optimal control problem and, for this problem, we: (i) prove convexity under a suitable condition; (ii) find the optimal solution; (iii) turn our results into an algorithm, using it to  design an intelligent parking system for connected vehicles. Validations are performed both {\em in-silico} and {\em in-vivo}, with real cars. \textcolor{black}{As such, the main purpose of this paper is twofold: (i) we introduce, and rigorously characterize our algorithm; (ii) propose a stand-alone implementation of our results, suitable for in-vivo experiments on real cars.}  In-vivo validations were performed via an hardware-in-the-loop platform allowing to \textcolor{black}{embed} real cars/drivers in the experiments. Using the platform, we deploy our algorithm on a real vehicle showing its suitability for in-car operation. All experiments confirm the effectiveness of our approach ({documented code/data for our simulations at \url{https://tinyurl.com/3ep4pknh})}.

While our results are inspired by the ones in \cite{russo2020crowdsourcing,designof}, our paper extends these in several ways. First, the results in \cite{russo2020crowdsourcing,designof} cannot consider box constraints and hence cannot tackle the control problem of this paper. Second, \textcolor{black}{even when there are no box constraints, the results in \cite{russo2020crowdsourcing,designof} only solve an approximate version of the problem considered here. That is, the results from \cite{russo2020crowdsourcing,designof} only find an approximate, non-optimal, solution of the problem considered here. As we shall see, this means that} the solutions from \cite{russo2020crowdsourcing,designof} cannot get a better cost than the one obtained with the results of this paper. Third, the algorithm obtained from the results in this paper is deployed, and validated, on a real car and this was not done in \cite{russo2020crowdsourcing,designof}. \textcolor{black}{The \emph{in-vivo} implementation is  novel.}

\subsubsection*{Notation}
sets are in {\em calligraphic} and vectors in {\bf bold}. Given the measurable space $(\mathcal{X},\mathcal{F}_x)$, with $\mathcal{X}\subseteq\R^{d}$ ($\mathcal{X}\subseteq\Z^{d}$) and $\mathcal{F}_x$ being a $\sigma$-algebra on $\mathcal{X}$, 
a random variable on $(\mathcal{X},\mathcal{F}_x)$ is denoted by $\mathbf{X}$ and its realization by $\mathbf{x}$. The \textit{probability density (resp. {mass})  function} or \textit{pdf} (\textit{pmf}) of a continuous (discrete) $\mathbf{X}$ is denoted by $p(\mathbf{x})$. The convex subset of such probability functions (pfs) is $\sD$.  The  expectation of a function $\mathbf{h}(\cdot)$ of the continuous  variable $\mathbf{X}$ is  $\E_{{p}}[\mathbf{h}(\mathbf{X})]:=\int\mathbf{h}(\mathbf{x})p(\mathbf{x})d\mathbf{x}$, where the integral (in the sense of Lebesgue) is over the support of $p(\mathbf{x})$\textcolor{black}{, which we denote by $\sS(p)$}. The joint pf of $\bv{X}_1$, $\bv{X}_2$ is $p(\mathbf{x}_1,\mathbf{x}_2)$ and the conditional pf of $\mathbf{X}_1$ given $\mathbf{X}_2$ is $p\left( \mathbf{x}_1\mid \mathbf{x}_2 \right)$. Countable sets are denoted by $\lbrace w_k \rbrace_{k_1:k_n}$, where $w_k$ is the generic element of the set and $k_1:k_n$ is the closed set of consecutive integers between $k_1$ and $k_n$. The KL divergence between $p(\mathbf{x})$ and $q(\mathbf{x})$, where $p$ is absolutely continuous w.r.t. $q$, is  $\mathcal{D}_{\KL}\left(p \mid\mid q \right):= \int_{\textcolor{black}{\sS(p)}} p \; \ln\left( {p}/{q}\right)\,d\mathbf{x}$: it is non-negative and $0$ if and only if $p(\mathbf{x}) = q(\mathbf{x})$. In the  expressions for the expectation and KL divergence, the integral is replaced by the sum if the variables are discrete. Finally: (i) we let $\mathds{1}_\mathcal{A}(\bv{x})$  denote the indicator function being equal to $1$ if $\bv{x}\in\mathcal{A}\subseteq \mathcal{X}$ and $0$ otherwise; (ii) set exclusion is instead denoted by $\setminus$.

\section{The Setup}

The agent seeks to craft its behavior by combining a number of sources to fulfill a task that involves tracking a target/desired behavior while maximizing an agent-specific reward over the time horizon $\sT:= 0:T$, $T>0$. The agent's state at time step $k \in \sT$ is $\bv{x}_k \in \mathcal{X}$ and the target behavior that the agent seeks to track is $p_{0:T} := p_0(\bv{x}_0)\prod_{k\in 1:T} p(\bv{x}_k\mid\bv{x}_{k-1})$. As in \cite{russo2020crowdsourcing,designof}, we design the behavior of the agent by designing its joint pf $\pi_{0:T} := \pi(\bv{x}_0,\ldots,\bv{x}_T)$ and we have:
\begin{equation}\label{eqn:agent}
    \pi_{0:T} =\pi_0(\bv{x}_0)\prod_{k\in 1:T} \pi(\bv{x}_k\mid\bv{x}_{k-1}).
\end{equation}
That is, the behavior of the agent can be designed via the pfs $\pi(\bv{x}_k\mid\bv{x}_{k-1})$, i.e., the transition probabilities. To do so, the agent has access to $S$ sources and we denote by $\pi^{(i)}(\bv{x}_k\mid\bv{x}_{k-1})$\textcolor{black}{, with support $\sS(\pi)\subseteq \mathcal{X}$,} the behavior made available by source $i$, $i\in\mathcal{S}:= 1:S$, at $k-1$. We also let $r_k(\bv{x}_k)$ be the agent's reward for being in state $\bv{x}_k$ at $k$. 

\section{Formulation of the Control Problem}\label{sec:control_problem}

Let $\alpha_k^{(1)},\ldots,\alpha_k^{(S)}$ be weights and $\boldsymbol{\alpha}_k$ be their stack. Then, the control problem we consider can be formalized as: 
\begin{Problem}\label{prob:problem_merge}
find the sequence ${\left\{\boldsymbol{\alpha}_k^\ast\right\}_{1:T}}$ solving
    \begin{equation*}
    \begin{aligned}
    \underset{\left\{ \boldsymbol{\alpha}_k\right\}_{1:T}}{\text{min}}
    &\DKL\left(\pi_{0:T}\mid\mid p_{0:T}\right) - \sum_{k=1}^T\E_{\pi(\bv{x}_{k-1})}\left[\tilde{r}_k(\bv{X}_{k-1})\right]\\
    \mbox{s.t.} & \ \mathbb{E}_{\pi(\bv{x}_k\mid\bv{x}_{k-1})}\left[\mathds{1}_{\mathcal{X}_k}(\bv{x}_k)\right]\geq 1-\epsilon_k, \ \ \ \forall k,\\
    & \ \pi(\bv{x}_k\mid\bv{x}_{k-1}) = \sum_{i\in\sS}\alpha_k^{(i)}\policy{\bv{x}_k\mid\bv{x}_{k-1}}{(i)}, \ \ \ \forall k, \\
    & \sum_{i\in\sS}\alpha_k^{(i)} = 1, \ \ \alpha_k^{(i)} \in [0, 1],   \ \ \ \forall k. 
    \end{aligned}
    \end{equation*}
\end{Problem}
In Problem \ref{prob:problem_merge}, $\tilde{r}_k(\bv{x}_{k-1}) := \E_{\pi(\bv{x}_k\mid\bv{x}_{k-1})}\left[r_k(\bv{X}_k)\right]$ and we note  that $\E_{\pi(\bv{x}_{k-1})}\left[\tilde{r}_k(\bv{X}_{k-1})\right] = \E_{\pi(\bv{x}_{k})}\left[r_k(\bv{X}_k)\right]$ is the expected reward for the agent when the behavior in \eqref{eqn:agent} is followed.  The problem is a finite-horizon optimal control problem with the $\boldsymbol{\alpha}_k$'s \textcolor{black}{as decision variables. As we shall see, these are generated as feedback from the agent state}  (Section \ref{sec:algorithm}). We say that $\left\{\pi^\ast\left(\bv{x}_k\mid\bv{x}_{k-1}\right)\right\}_{1:T}$, with  $\pi^\ast\left(\bv{x}_k\mid\bv{x}_{k-1}\right) = \sum_{i\in\sS}\alpha_k^{(i),\ast}\policy{\bv{x}_k\mid\bv{x}_{k-1}}{(i)}$, is the optimal behavior for the agent\textcolor{black}{, obtained by linearly combining the sources via the $\boldsymbol{\alpha}_k^\ast$'s}. In the problem, the cost formalizes the fact that the agent seeks to maximize its reward, while tracking (in the KL divergence sense) the target behavior. Minimizing the KL term amounts at minimizing the discrepancy between $\pi_{0:T}$ and $p_{0:T}$. This term can also be thought as a divergence regularizer and, when $p_{0:T}$ is uniform, it becomes an entropic regularizer.
The second and third constraints formalize the fact that, at each $k$, $\pi^\ast\left(\bv{x}_k\mid\bv{x}_{k-1}\right)\in\sD$ \textcolor{black}{and that this is a convex combination of} the pfs from the sources.  The first constraint is a box constraint and models the fact that the probability that the agent behavior is, at each $k$, inside some (e.g., safety) measurable set $\mathcal{X}_k \subseteq \mathcal{X}$ is greater than some $\textcolor{black}{\epsilon}_k \ge 0$. We now make the following
\begin{Assumption}\label{asn:bounded}
the optimal cost of Problem \ref{prob:problem_merge} is bounded.
\end{Assumption}
\begin{Remark}\label{rem:bounded}
\textcolor{black}{the cost in Problem \ref{prob:problem_merge} can be recast as $\DKL\left(\pi_{0:T}\mid\mid \tilde{p}_{0:T} \right)$, where $\tilde{p}_{0:T} \propto p_{0:T}\exp{\left(\sum_{k=1}^Tr_k(\bv{x}_k)\right)}$. This means that Assumption \ref{asn:bounded} is satisfied whenever there exists some $\tilde\pi_{0:T}$ that is feasible for Problem \ref{prob:problem_merge} and that is absolutely continuous w.r.t. $\tilde{p}_{0:T}$. See also Remark \ref{rem:sources_assumption_1}.}
\end{Remark}

\begin{algorithm}[b!]
    \caption{{Pseudo-code}}\label{alg}
\begin{algorithmic}[1]
\State \textbf{Input:} time horizon $T$, target behavior $p_{0:T}$, reward $r_k(\cdot)$, sources $\pi^{(i)}(\bv{x}_k\mid\bv{x}_{k-1})$, box constraints  (optional)
\State \textbf{Output:} optimal agent behavior $\left\{\pi^\ast\left(\bv{x}_k\mid\bv{x}_{k-1}\right)\right\}_{1:T}$
\State $\hat{r}_T(\bv{x}_T)\gets0$
\State {\bf for} {$k = T:1$} {\bf do}
    \State $\bar{r}_k(\bv{x}_k) \gets r_k(\bv{x}_k) - \hat{r}_k(\bv{x}_k)$
    \State  $\boldsymbol{\alpha}^{\ast}_{k} (\bv{x}_{k-1}) \gets \text{minimizer of the problem in \eqref{eqn:subproblem_alg}}$;
    \State $\pi^\ast(\bv{x}_k\mid\bv{x}_{k-1})\gets\sum_{i\in\mathcal{S}}\boldsymbol{\alpha}^{(i),\ast}_{k}(\bv{x}_{k-1})\pi^{(i)}(\bv{x}_k\mid\bv{x}_{k-1})$ 
    \State $\hat{r}_{k-1}(\bv{x}_{k-1}) \gets c_k(\pi^\ast(\bv{x}_k\mid\bv{x}_{k-1}))$
\State {\bf end for}
\end{algorithmic}
\end{algorithm}

\section{Main Results}\label{sec:algorithm}

We propose an algorithm to tackle Problem \ref{prob:problem_merge}.  
The algorithm takes as input $T$, the target behavior, the reward, the behaviors of the sources and the box constraints of Problem \ref{prob:problem_merge} (if any). Given the inputs, \textcolor{black}{it} returns the optimal behavior for the agent. The key steps of the algorithm are given as pseudo-code in Algorithm \ref{alg}. An agent that follows Algorithm \ref{alg} computes  $\left\{\boldsymbol{\alpha}_k\right\}_{1:N}$ via backward recursion (lines $4-9$). At each $k$, the $\boldsymbol{\alpha}_k$'s are obtained as the minimizers of
\begin{equation}\label{eqn:subproblem_alg}
\begin{aligned}
    \underset{\boldsymbol{\alpha}_k}{\min}
    & \ c_k(\pi(\bv{x}_k\mid\bv{x}_{k-1}))\\
     \mbox{s.t.} & \ \mathbb{E}_{\pi(\bv{x}_k\mid\bv{x}_{k-1})}\left[\mathds{1}_{\mathcal{X}_k}(\bv{x}_k)\right]\geq 1-\epsilon_k\\
    & \ \pi(\bv{x}_k\mid\bv{x}_{k-1}) = \sum_{i\in\sS}\alpha_k^{(i)}\policy{\bv{x}_k\mid\bv{x}_{k-1}}{(i)} \\
     & \sum_{i\in\sS}\alpha_k^{(i)} = 1, \ \ \alpha_k^{(i)} \in [0, 1],
    \end{aligned}
\end{equation}
where
\begin{equation}\label{eqn:cost_step}
\begin{split}
c_k(\pi(\bv{x}_k\mid\bv{x}_{k-1})) & := \DKL\left(\pi(\bv{x}_k\mid\bv{x}_{k-1})\mid\mid p(\bv{x}_k\mid\bv{x}_{k-1})\right) \\
& - \E_{\pi(\bv{x}_k\mid\bv{x}_{k-1})}\left[\bar{r}_k(\bv{X}_{k})\right],
\end{split}
\end{equation}
with $\bar{r}_k(\cdot)$ iteratively built within the recursion (lines $5$, $8$). The weights are used (line $7$) to compute $\pi^\ast(\bv{x}_k\mid\bv{x}_{k-1})$. 

\begin{Remark}results are stated for continuous variables (proofs for discrete variables omitted for brevity). \textcolor{black}{Note that integrals/summations in the cost are over $\sS(\pi)$}.
\end{Remark}
\begin{Remark}\label{rem:sources_assumption_1}
\textcolor{black}{following Remark \ref{rem:bounded}, the optimal cost of the problem in (\ref{eqn:subproblem_alg}) is bounded if there exists some feasible $\tilde\pi(\bv{x}_k\mid\bv{x}_{k-1})$ that is absolutely continuous w.r.t. $\tilde p(\bv{x}_k\mid\bv{x}_{k-1}) \propto p(\bv{x}_k\mid\bv{x}_{k-1})\exp\left(\bar{r}_k(\bv{x}_k)\right)$. From the design viewpoint, this can satisfied if it holds for at least one $\policy{\bv{x}_k\mid\bv{x}_{k-1}}{(i)}$.}
\end{Remark}
Finally, we make the following
\begin{Assumption}\label{asn:int_def} $\forall i \in \mathcal{S}$ and $\forall \bv{x}_{k-1}\in\mathcal{X}$,  
there exist some constants, say $m$ and $M$, with $0 < m \le M < +\infty$, such that $m \le \policy{\bv{x}_k\mid\bv{x}_{k-1}}{(i)} \le M$, $\forall \bv{x}_k\in\sS(\pi)$.
\end{Assumption}
\begin{Remark}
\textcolor{black}{Assumption \ref{asn:bounded} is  satisfied for e.g., Gaussian distributions. As we shall see (Section \ref{sec:application}) the assumption can be fulfilled by injecting noise in the data.}
\end{Remark}
\subsection{Properties of Algorithm \ref{alg}}\label{sec:properties}

{\color{black} Before characterizing convexity of the problems recursively solved in Algorithm \ref{alg} and optimality, we give a condition ensuring feasibility of the problem in \eqref{eqn:subproblem_alg}.
\begin{Lemma}\label{lem:feasibility}
the problem in \eqref{eqn:subproblem_alg} is feasible if and only if there exists at least one source, say $\pi^{(j)}(\bv{x}_k\mid\bv{x}_{k-1})$, such that $\mathbb{E}_{\pi^{(j)}(\bv{x}_k\mid\bv{x}_{k-1})}\left[\mathds{1}_{\mathcal{X}_k}(\bv{x}_k)\right]\geq 1-\epsilon_k$.
\end{Lemma}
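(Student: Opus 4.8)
The plan is to reduce the feasibility question to an elementary statement about a convex combination of scalars, exploiting that the only coupling constraint in \eqref{eqn:subproblem_alg} --- the safety (box) constraint --- is \emph{linear} in the decision variables $\boldsymbol{\alpha}_k$. First I would substitute the parametrization $\pi(\bv{x}_k\mid\bv{x}_{k-1}) = \sum_{i\in\sS}\alpha_k^{(i)}\policy{\bv{x}_k\mid\bv{x}_{k-1}}{(i)}$ into the safety constraint and pull the finite sum over sources outside the integral (legitimate since the integrand $\mathds{1}_{\mathcal{X}_k}(\bv{x}_k)\policy{\bv{x}_k\mid\bv{x}_{k-1}}{(i)}$ is nonnegative and $\sS$ is finite), obtaining
\begin{equation*}
\mathbb{E}_{\pi(\bv{x}_k\mid\bv{x}_{k-1})}\!\left[\mathds{1}_{\mathcal{X}_k}(\bv{x}_k)\right] = \sum_{i\in\sS}\alpha_k^{(i)}\, b_i, \qquad b_i := \mathbb{E}_{\policy{\bv{x}_k\mid\bv{x}_{k-1}}{(i)}}\!\left[\mathds{1}_{\mathcal{X}_k}(\bv{x}_k)\right].
\end{equation*}
Since any $\pi$ of this form with $\boldsymbol{\alpha}_k$ on the simplex is automatically a valid pf, the remaining two constraints impose nothing beyond $\sum_{i\in\sS}\alpha_k^{(i)}=1$, $\alpha_k^{(i)}\in[0,1]$. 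Thus the feasible set is nonempty if and only if some point of the probability simplex makes the convex combination $\sum_{i\in\sS}\alpha_k^{(i)} b_i$ reach the level $1-\epsilon_k$.

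Next I would settle the two implications. For sufficiency, if a source $j$ satisfies $b_j \geq 1-\epsilon_k$, the vertex $\alpha_k^{(j)}=1$ and $\alpha_k^{(i)}=0$ for $i\neq j$ is admissible and attains $\sum_{i\in\sS}\alpha_k^{(i)} b_i = b_j \geq 1-\epsilon_k$, so \eqref{eqn:subproblem_alg} is feasible. For necessity, I would use that any convex combination is dominated by its largest entry, $\sum_{i\in\sS}\alpha_k^{(i)} b_i \leq \max_{i\in\sS} b_i$ (since $\sum_{i\in\sS} \alpha_k^{(i)}=1$ and $\alpha_k^{(i)}\geq 0$); feasibility then forces $\max_{i\in\sS} b_i \geq 1-\epsilon_k$, and the maximizing index furnishes the claimed source $j$ with $\mathbb{E}_{\policy{\bv{x}_k\mid\bv{x}_{k-1}}{(j)}}\!\left[\mathds{1}_{\mathcal{X}_k}(\bv{x}_k)\right]\geq 1-\epsilon_k$.

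I do not expect a genuine obstacle: the result hinges only on linearity of the safety constraint in $\boldsymbol{\alpha}_k$ and on the fact that a linear functional over the simplex is maximized at a vertex. The sole point deserving care is the sum--integral interchange above, which is immediate by nonnegativity of the integrand and finiteness of $\sS$. It is worth remarking that this feasibility characterization is independent of the objective $c_k$ and does not rely on Assumption \ref{asn:int_def}, as it is purely a statement about how much probability mass each source places on the safe set $\mathcal{X}_k$.
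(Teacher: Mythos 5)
Your proof is correct and follows essentially the same route as the paper's: both reduce the question to the linearity of the safety constraint in $\boldsymbol{\alpha}_k$ over the probability simplex and evaluate at the vertices, the only cosmetic difference being that you argue necessity directly via the bound $\sum_{i}\alpha_k^{(i)}b_i \le \max_i b_i$ while the paper argues by contraposition (infeasibility at every vertex). No gaps.
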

\begin{proof} the {\em if} part clearly holds. For the {\em only if} part we prove that if problem (\ref{eqn:subproblem_alg}) is infeasible, then  $\max_i\E_{\pi^{(i)}(\bv{x}_k\mid\bv{x}_{k-1})}\left[\mathds{1}_{\mathcal{X}_k}(\bv{x}_k)\right]< 1-\epsilon_k$. In fact, if the problem is infeasible, then for all $\boldsymbol{\alpha}_k$ such that $\sum_{i\in\sS}\alpha_k^{(i)} = 1$ and $\alpha_k^{(i)} \in [0, 1]$ it must hold that
\begin{equation*}    
\int_{\mathcal{X}_k}\sum_{i\in\sS}\alpha_k^{(i)}\policy{\bv{x}_k\mid\bv{x}_{k-1}}{(i)}d\bv{x}_k < 1-\epsilon_k.
\end{equation*}

Note that this must also hold for all $\boldsymbol{\alpha}_k$ such that $\sum_{i\in\sS}\alpha_k^{(i)} = 1$ and $\alpha_k^{(i)} \in \{0, 1\}$, as these are contained in the set of real-valued $\boldsymbol{\alpha}_k$'s. We conclude the proof by noticing that, if $\boldsymbol{\alpha}_k$ is such that $\alpha_k^{(i)}=0 \forall i\neq j$ and $\alpha_k^{(j)}=1$, then 
\begin{equation*}
\begin{aligned}
\int_{\mathcal{X}_k}\sum_{i\in\sS}\alpha_k^{(i)}\policy{\bv{x}_k\mid\bv{x}_{k-1}}{(i)}d\bv{x}_k &= \int_{\mathcal{X}_k}\policy{\bv{x}_k\mid\bv{x}_{k-1}}{(j)}d\bv{x}_k\\
&= E_{\pi^{(j)}(\bv{x}_k\mid\bv{x}_{k-1})}\left[\mathds{1}_{\mathcal{X}_k}(\bv{x}_k)\right].
\end{aligned}
\end{equation*}
It then follows that, $\forall j$, 
\begin{equation*}
E_{\pi^{(j)}(\bv{x}_k\mid\bv{x}_{k-1})}\left[\mathds{1}_{\mathcal{X}_k}(\bv{x}_k)\right]< 1-\epsilon_k.
\end{equation*}

\end{proof}}

\subsubsection*{Convexity}
we are now ready to prove the following
\begin{Proposition}\label{lem:convexity}
let Assumption \ref{asn:int_def} hold. Then, the problem in \eqref{eqn:subproblem_alg} is convex.
\end{Proposition}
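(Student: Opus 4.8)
The plan is to exhibit \eqref{eqn:subproblem_alg} as the minimization of a convex function over a convex set, with $\boldsymbol{\alpha}_k$ as the decision variable. The starting observation, which I would use repeatedly, is that $\pi(\bv{x}_k\mid\bv{x}_{k-1}) = \sum_{i\in\sS}\alpha_k^{(i)}\policy{\bv{x}_k\mid\bv{x}_{k-1}}{(i)}$ is, for each fixed pair $(\bv{x}_{k-1},\bv{x}_k)$, an \emph{affine} function of $\boldsymbol{\alpha}_k$; since convexity is preserved under composition with affine maps, this affine dependence is what lets me push convexity from the space of pfs down to the space of weights.

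First I would verify that the feasible set is convex. The last line of \eqref{eqn:subproblem_alg} defines the probability simplex $\{\boldsymbol{\alpha}_k : \sum_{i\in\sS}\alpha_k^{(i)} = 1,\ \alpha_k^{(i)}\in[0,1]\}$, which is convex. The box constraint rewrites as $\mathbb{E}_{\pi(\bv{x}_k\mid\bv{x}_{k-1})}[\mathds{1}_{\mathcal{X}_k}(\bv{x}_k)] = \sum_{i\in\sS}\alpha_k^{(i)}\int_{\mathcal{X}_k}\policy{\bv{x}_k\mid\bv{x}_{k-1}}{(i)}\,d\bv{x}_k$, which is linear in $\boldsymbol{\alpha}_k$, so the requirement $\ge 1-\epsilon_k$ cuts out a halfspace. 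The feasible set is the intersection of the simplex with this halfspace, hence convex.

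Next I would split the objective \eqref{eqn:cost_step} into its reward and divergence parts. The reward term $-\E_{\pi(\bv{x}_k\mid\bv{x}_{k-1})}[\bar r_k(\bv{X}_k)] = -\sum_{i\in\sS}\alpha_k^{(i)}\int\bar r_k(\bv{x}_k)\policy{\bv{x}_k\mid\bv{x}_{k-1}}{(i)}\,d\bv{x}_k$ is affine in $\boldsymbol{\alpha}_k$, hence convex. For the KL term I would argue in two stages: (i) the functional $\pi\mapsto\DKL(\pi\mid\mid p)$ is convex in $\pi$, because its integrand $t\mapsto t\ln\!\big(t/p(\bv{x}_k\mid\bv{x}_{k-1})\big)$ has second derivative $1/t>0$ and is therefore convex in the scalar value $t$, and pointwise convexity is preserved upon integrating over the support; (ii) composing this convex functional with the affine map $\boldsymbol{\alpha}_k\mapsto\pi(\cdot\mid\bv{x}_{k-1})$ yields a convex function of $\boldsymbol{\alpha}_k$. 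Summing the two convex contributions gives a convex objective, and minimizing a convex objective over a convex set is a convex problem.

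The role of Assumption \ref{asn:int_def}, and the step I expect to require the most care, is in making the KL term well defined so that the convexity argument above is rigorous. Because $\sum_{i\in\sS}\alpha_k^{(i)}=1$ and $\alpha_k^{(i)}\ge0$, the bounds $m\le\policy{\bv{x}_k\mid\bv{x}_{k-1}}{(i)}\le M$ propagate to $m\le\pi(\bv{x}_k\mid\bv{x}_{k-1})\le M$ on $\sS(\pi)$ for every feasible $\boldsymbol{\alpha}_k$. This guarantees that $\pi$ is bounded away from $0$ and from above, that its support equals $\sS(\pi)$ independently of $\boldsymbol{\alpha}_k$, and hence that both the integrand and the domain of integration in $\DKL(\pi\mid\mid p)$ are fixed and the integral is finite. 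Without this uniform bound the support could vary with $\boldsymbol{\alpha}_k$ and the integrand could become singular, so the transfer of pointwise convexity to the integral functional would no longer be immediate; Assumption \ref{asn:int_def} removes exactly these pathologies and lets the composition-with-an-affine-map argument go through cleanly.
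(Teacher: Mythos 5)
Your proof is correct, but it reaches the conclusion by a genuinely different route than the paper. The paper proves convexity of the cost by a second-order computation: it embeds the mixture constraint into the cost, differentiates twice under the integral sign, invokes Assumption \ref{asn:int_def} to check that the entries $h_{jm}(\bv{x}_{k-1})=\int_{\sS(\pi)}\policy{\bv{x}_k\mid\bv{x}_{k-1}}{(j)}\policy{\bv{x}_k\mid\bv{x}_{k-1}}{(m)}\big/\bigl(\sum_{i\in\sS}\alpha_k^{(i)}\policy{\bv{x}_k\mid\bv{x}_{k-1}}{(i)}\bigr)\,d\bv{x}_k$ are well defined (bounded by $M/m$), and then shows the Hessian is positive semi-definite by writing it as the integral of a nonnegatively weighted rank-one outer product. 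You instead avoid differentiation entirely: pointwise convexity of $t\mapsto t\ln\bigl(t/p\bigr)$, preservation of convexity under integration, and composition with the affine map $\boldsymbol{\alpha}_k\mapsto\sum_{i\in\sS}\alpha_k^{(i)}\policy{\bv{x}_k\mid\bv{x}_{k-1}}{(i)}$; your treatment of the constraints is the same as the paper's. Your route is more elementary --- it requires no interchange of differentiation and integration and no smoothness --- and it correctly isolates Assumption \ref{asn:int_def} as serving well-posedness of the integral rather than convexity per se; the paper's route, in exchange, yields the explicit Hessian, which quantifies the curvature of the objective. One small overstatement on your side: Assumption \ref{asn:int_def} bounds the sources but not the target $p(\bv{x}_k\mid\bv{x}_{k-1})$, so the KL integral need not be \emph{finite} (it can equal $+\infty$ if $p$ vanishes on part of $\sS(\pi)$); this does not harm your argument, since the integrand is bounded below by $-p/e$ so the integral is well defined in the extended-real sense and an extended-real-valued convex objective is still convex, but the finiteness claim as stated is not literally implied by the assumption.
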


\begin{proof}
Clearly, the second and third constraint in the problem are convex. For the first constraint, we get 
\begin{equation*}
    \begin{aligned}
\mathbb{E}_{\pi(\bv{x}_k\mid\bv{x}_{k-1})}\left[\mathds{1}_{\mathcal{X}_k}(\bv{x}_k)\right] &= \int  \pi(\bv{x}_k\mid\bv{x}_{k-1})\mathds{1}_{\mathcal{X}_k}(\bv{x}_k)d\bv{x}_k \\ &= \int_{\mathcal{X}_k}\sum_{i \in \mathcal S}\alpha_k^{(i)}\pi^{(i)}(\bv{x}_k\mid\bv{x}_{k-1})d\bv{x}_k \\ &= \sum_{i \in \mathcal S}\alpha_k^{(i)}\int_{\mathcal{X}_k}\pi^{(i)}(\bv{x}_k\mid\bv{x}_{k-1})d\bv{x}_k,
 \end{aligned}
\end{equation*}
which is therefore convex in the decision variables. \\
Now, we show that the cost is also convex in these variables and we do so by explicitly computing, for each $\bv{x}_{k-1}$, its Hessian, say $\bv{H}(\bv{x}_{k-1})$. Specifically, after embedding the second constraint of the problem in (\ref{eqn:subproblem_alg}) in the cost and differentiating with respect to the decision variables we get, for each $j\in\sS$:
    \begin{equation*}
    \begin{split}
       & \frac{\partial c_k}{\partial \alpha_k^{(j)}}  := \frac{\partial}{\partial \alpha_k^{(j)}}\int_{\textcolor{black}{\sS(\pi)}} \sum_{i\in\sS}\alpha_k^{(i)}\policy{\bv{x}_k\mid\bv{x}_{k-1}}{(i)}\left(\log\left(\frac{\sum_{i\in\sS}\alpha_k^{(i)}\policy{\bv{x}_k\mid\bv{x}_{k-1}}{(i)}}{p(\bv{x}_k\mid\bv{x}_{k-1})}\right) - \bar{r}_k(\bv{x}_{k})\right)d\bv{x}_k\\
        &= \int_{\textcolor{black}{\sS(\pi)}} \frac{\partial}{\partial \alpha_k^{(j)}} \sum_{i\in\sS}\alpha_k^{(i)}\policy{\bv{x}_k\mid\bv{x}_{k-1}}{(i)}\left(\log\left(\frac{\sum_{i\in\sS}\alpha_k^{(i)}\policy{\bv{x}_k\mid\bv{x}_{k-1}}{(i)}}{p(\bv{x}_k\mid\bv{x}_{k-1})}\right) - \bar{r}_k(\bv{x}_{k})\right)d\bv{x}_k\\
        &= \int_{\textcolor{black}{\sS(\pi)}} \policy{\bv{x}_k\mid\bv{x}_{k-1}}{(j)}\Bigg( \Bigg.\log\left(\sum_{i\in\sS}\alpha_k^{(i)}\policy{\bv{x}_k\mid\bv{x}_{k-1}}{(i)}\right) - \log\left(p(\bv{x}_k\mid\bv{x}_{k-1})\right) - \bar{r}_k(\bv{x}_k) + 1\Bigg. \Bigg) d\bv{x}_k . \\
    \end{split}
    \end{equation*}
The above chain of identities was obtained by swapping integration and differentiation, leveraging the fact that the cost is smooth in the decision variables. Similarly, we  get 

\begin{equation*}
\begin{split}
\frac{\partial^2 c_k}{\partial \alpha_k^{(j)^2}} &=  \frac{\partial}{\partial \alpha_k^{(j)}} \int_{\textcolor{black}{\sS(\pi)}} \policy{\bv{x}_k\mid\bv{x}_{k-1}}{(j)}\Bigg( \Bigg.\log\left(\sum_{i\in\sS}\alpha_k^{(i)}\policy{\bv{x}_k\mid\bv{x}_{k-1}}{(i)}\right) - \log\left(p(\bv{x}_k\mid\bv{x}_{k-1})\right) - \bar{r}_k(\bv{x}_k) + 1\Bigg. \Bigg) d\bv{x}_k \\
&= \textcolor{black}{\int_{\sS(\pi)} \frac{\policy{\bv{x}_k\mid\bv{x}_{k-1}}{(j)}^2}{\sum_{i\in\sS}\alpha_k^{(i)}\policy{\bv{x}_k\mid\bv{x}_{k-1}}{(i)}}d\bv{x}_k}\\
&=: h_{\textcolor{black}{jj}}(\bv{x}_{k-1}), 
\end{split}
\end{equation*}
    
    and, for each $m \ne j$, $m\in\sS$,

\begin{equation*}
\frac{\partial^2 c_k}{\partial \alpha_k^{(j)}\partial \alpha_k^{(m)}} = \int_{\textcolor{black}{\sS(\pi)}} \frac{\policy{\bv{x}_k\mid\bv{x}_{k-1}}{(j)}\policy{\bv{x}_k\mid\bv{x}_{k-1}}{(m)}}{\sum_{i\in\sS}\alpha_k^{(i)}\policy{\bv{x}_k\mid\bv{x}_{k-1}}{(i)}}d\bv{x}_k =: h_{\textcolor{black}{j}m}(\bv{x}_{k-1}).
\end{equation*}
Also, following Assumption \ref{asn:int_def}, $\forall j,m\in \sS$ we have that
\begin{equation*}
\int_{\textcolor{black}{\sS(\pi)}}\abs{ \frac{\policy{\bv{x}_k\mid\bv{x}_{k-1}}{(j)}\policy{\bv{x}_k\mid\bv{x}_{k-1}}{(m)}}{\sum_{i\in\sS}\alpha_k^{(i)}\policy{\bv{x}_k\mid\bv{x}_{k-1}} {(i)}}}d\bv{x}_k\le \frac{M}{m}
\end{equation*}
where we used the third constraint. That is, the above integrals are well defined and thus we can conclude the proof by computing $\bv{v}^T\bv{H}(\bv{x}_{k-1})\bv{v}$ for some non-zero $\bv{v}\in\mathbb{R}^S$:
\begin{align*}
    \bv{v}^T\bv{H}(\bv{x}_{k-1})\bv{v} &= \sum_{j,m}v_jv_mh_{jm} (\bv{x}_{k-1})\\
    &= \sum_{j,m}v_jv_m\int_{\textcolor{black}{\sS(\pi)}}a_{jm}(\bv{x}_k,\bv{x}_{k-1})d\bv{x}_k\\
    &= \int_{\textcolor{black}{\sS(\pi)}}\sum_{j,m}v_jv_ma_{jm}(\bv{x}_k,\bv{x}_{k-1})d\bv{x}_k,
\end{align*}
where the $a_{jm}$'s are the elements of the matrix
\begin{equation*}
A(\bv{x}_k,\bv{x}_{k-1}) := \bar{\pi} (\bv{x}_{k},\bv{x}_{k-1})\left[\begin{array}{*{20}c}
\policy{\bv{x}_k\mid\bv{x}_{k-1}}{(1)}\\ \vdots\\ \policy{\bv{x}_k\mid\bv{x}_{k-1}}{(S)}
\end{array}\right] \cdot \\
 \cdot\left[\begin{array}{*{20}c}
\policy{\bv{x}_k\mid\bv{x}_{k-1}}{(1)}  \ldots  \policy{\bv{x}_k\mid\bv{x}_{k-1}}{(S)}
\end{array}\right],
\end{equation*}
with $\bar{\pi} (\bv{x}_{k},\bv{x}_{k-1}) := {1}/\left({\sum_{i\in\sS}\alpha_k^{(i)}\policy{\bv{x}_k\mid\bv{x}_{k-1}}{(i)}}\right)$. The above expression is indeed positive semi-definite for each $\bv{x}_k$, $\bv{x}_{k-1}$ and we can draw the desired conclusion. \end{proof}

\subsubsection*{Optimality} we can now prove the following

\begin{Proposition}\label{thm:1}
let Assumption \ref{asn:int_def} and Assumption \ref{asn:bounded} hold. Then, Algorithm \ref{alg} gives an optimal solution for Problem \ref{prob:problem_merge}.    
\end{Proposition}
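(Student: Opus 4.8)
The plan is to recognize Problem \ref{prob:problem_merge} as a finite-horizon optimal control problem with Markovian structure and to solve it by dynamic programming, showing that Algorithm \ref{alg} implements exactly the backward Bellman recursion. First I would use the chain rule for the KL divergence, together with the factorization \eqref{eqn:agent} of $\pi_{0:T}$ and the analogous one for $p_{0:T}$, to decompose the joint divergence as
\[
\DKL(\pi_{0:T}\mid\mid p_{0:T}) = \DKL(\pi_0\mid\mid p_0) + \sum_{k=1}^T \E_{\pi(\bv{x}_{k-1})}\left[\DKL\big(\pi(\bv{x}_k\mid\bv{x}_{k-1})\mid\mid p(\bv{x}_k\mid\bv{x}_{k-1})\big)\right].
\]
Combining this with the identity $\E_{\pi(\bv{x}_{k-1})}[\tilde{r}_k] = \E_{\pi(\bv{x}_k)}[r_k]$ already noted after the problem statement, the cost of Problem \ref{prob:problem_merge} becomes (modulo the constant $\DKL(\pi_0\mid\mid p_0)$, since $\pi_0$ is not a decision variable) a sum over $k$ of stage costs of the form $\E_{\pi(\bv{x}_{k-1})}[\DKL(\pi(\bv{x}_k\mid\bv{x}_{k-1})\mid\mid p(\bv{x}_k\mid\bv{x}_{k-1})) - \E_{\pi(\bv{x}_k\mid\bv{x}_{k-1})}[r_k]]$, while the constraints decouple across $k$ and, for each $k$, involve only $\boldsymbol{\alpha}_k$.

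Second, I would introduce the optimal cost-to-go $V_k(\bv{x}_{k-1})$ as the minimal tail cost from step $k$ to $T$ conditioned on the state $\bv{x}_{k-1}$, with $V_{T+1}\equiv 0$. Because the state evolution is Markov and the per-step constraints are separable, the principle of optimality yields the Bellman recursion
\[
V_k(\bv{x}_{k-1}) = \min_{\boldsymbol{\alpha}_k}\left[\DKL\big(\pi(\bv{x}_k\mid\bv{x}_{k-1})\mid\mid p(\bv{x}_k\mid\bv{x}_{k-1})\big) - \E_{\pi(\bv{x}_k\mid\bv{x}_{k-1})}\left[r_k(\bv{X}_k) - V_{k+1}(\bv{X}_k)\right]\right],
\]
subject to the per-step constraints of \eqref{eqn:subproblem_alg}. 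The crux is then to match this to the algorithm: I would prove by backward induction on $k$ that the reward-to-go $\hat{r}_k$ produced by Algorithm \ref{alg} coincides with $V_{k+1}$, so that the modified reward $\bar{r}_k = r_k - \hat{r}_k$ makes the per-step objective $c_k$ in \eqref{eqn:cost_step} identical to the bracket in the Bellman equation. The base case holds since $\hat{r}_T = 0 = V_{T+1}$; for the inductive step, the update $\hat{r}_{k-1} = c_k(\pi^\ast)$ evaluated at the per-step minimizer equals $\min_{\boldsymbol{\alpha}_k}[\DKL - \E_\pi[r_k - \hat{r}_k]] = V_k$, closing the induction and identifying the algorithm's $\boldsymbol{\alpha}_k^\ast(\bv{x}_{k-1})$ with the Bellman minimizers.

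Along the way I would invoke Proposition \ref{lem:convexity} to guarantee that each per-step problem \eqref{eqn:subproblem_alg} is convex, hence that a minimizer exists and is meaningfully characterized, and Assumption \ref{asn:bounded} (see also Remark \ref{rem:sources_assumption_1}) to ensure the cost-to-go functions are finite, so that the expectations $\E_\pi[V_{k+1}]$ and the recursion are well defined at every stage. Concatenating the per-step feedback minimizers then produces a feasible $\{\boldsymbol{\alpha}_k^\ast\}_{1:T}$ whose total cost equals $\DKL(\pi_0\mid\mid p_0) + \E_{\pi_0}[V_1]$, which is the global optimum of Problem \ref{prob:problem_merge} by the principle of optimality.

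I expect the main obstacle to be the rigorous justification of the principle of optimality in this probabilistic setting, i.e. arguing that pointwise (in $\bv{x}_{k-1}$) minimization of the stage objective is equivalent to global minimization. This requires that minimization and the outer expectation $\E_{\pi(\bv{x}_{k-1})}[\cdot]$ interchange, which relies on the separability of the constraints across $k$ and on a measurable-selection argument so that the state-dependent minimizers $\boldsymbol{\alpha}_k^\ast(\bv{x}_{k-1})$ assemble into admissible feedback policies. By contrast, the decomposition of the joint KL and the finiteness of the cost-to-go are comparatively routine once absolute continuity (as in Remark \ref{rem:bounded}) is in place.
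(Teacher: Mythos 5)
Your proposal is correct and follows essentially the same route as the paper: the paper's proof is exactly this backward dynamic-programming argument, using the KL chain rule to peel off the last stage, observing that the stage-$T$ minimization can be done pointwise in $\bv{x}_{T-1}$ so that it reduces to the per-step problem \eqref{eqn:subproblem_alg}, identifying $\hat{r}_{k-1}$ with the optimal cost-to-go, invoking Proposition \ref{lem:convexity} and Assumption \ref{asn:bounded}, and iterating backward. The only differences are cosmetic (you decompose the whole KL at once and use explicit value-function notation, and you flag the min/expectation interchange more carefully than the paper does).
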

\begin{proof}
the chain rule for the KL divergence and the linearity of expectation imply that the cost can be written as
\begin{equation}\label{eqn:cost_split}
 \DKL\left(\pi_{0:T-1}\mid\mid p_{0:T-1}\right)  - \sum_{k=1}^{T-1}\E_{\pi(\bv{x}_{k-1})}\left[\tilde{r}_k(\bv{X}_{k-1})\right] + \E_{\pi(\bv{x}_{T-1})}\left[c_T(\pi(\bv{x}_T\mid\bv{x}_{T-1}))\right],
\end{equation}
where $c_T(\pi(\bv{x}_T\mid\bv{x}_{T-1}))$ is defined as in \eqref{eqn:cost_step} with $\bar{r}_T(\bv{x}_T)$ given by Algorithm \ref{alg} -- see lines $3$ and $5$ and note that, at time step $T$, $\bar{r}_T(\bv{x}_T) = {r}_T(\bv{x}_T)$. To obtain the above expression, the fact that $c_T(\pi(\bv{x}_T\mid\bv{x}_{T-1}))$ only depends on $\bv{x}_{T-1}$ was also used. Hence, Problem \ref{prob:problem_merge} can be split into the sum of two sub-problems: a first problem over $k\in 0:T-1$ and the second for $k=T$. For this last time step, the problem can be solved independently on the others and is given by:
\begin{equation}\label{eqn:subproblem_N}
    \begin{aligned}
    \underset{\boldsymbol{\alpha}_T}{\text{ min }}
    & \ \E_{\pi(\bv{x}_{T-1})}[c_T(\pi(\bv{x}_T\mid\bv{x}_{T-1}))]\\
    \mbox{s.t.} & \ \mathbb{E}_{\pi(\bv{x}_T\mid\bv{x}_{T-1})}\left[\mathds{1}_{\mathcal{X}_T}(\bv{x}_T)\right]\geq 1-\epsilon_T,\\
    & \ \pi(\bv{x}_T\mid\bv{x}_{T-1}) = \sum_{i\in\sS}\alpha_T^{(i)}\policy{\bv{x}_T\mid\bv{x}_{T-1}}{(i)}, \\
    & \sum_{i\in\sS}\alpha_T^{(i)} = 1, \ \ \alpha_T^{(i)} \in [0, 1].
    \end{aligned}
\end{equation}
Using linearity of the expectation and the fact that the decision variable is independent on $\pi(\bv{x}_{T-1})$, we have that the minimizer of the problem in \eqref{eqn:subproblem_N} is the same as the problem in \eqref{eqn:subproblem_alg} with $k=T$. Following Proposition \ref{lem:convexity}, such a problem is convex and we denote its optimal solution as $\boldsymbol{\alpha}^{\ast}_{T} (\bv{x}_{T-1})$ -- see line $6$ of Algorithm \ref{alg} -- and the optimal cost of the problem, which is bounded by Assumption \ref{asn:bounded},  is $c_T(\pi^{\ast}(\bv{x}_T\mid\bv{x}_{T-1}))$, where: $$\pi^{\ast}(\bv{x}_T\mid\bv{x}_{T-1}) = \sum_{i\in\mathcal{S}}\boldsymbol{\alpha}^{(i),\ast}_{T}(\bv{x}_{T-1})\pi^{(i)}(\bv{x}_T\mid\bv{x}_{T-1})\textcolor{black}{.}$$ This gives $\hat{r}_{T-1}(\bv{x}_{T-1})$ in Algorithm \ref{alg} (lines $7-8$), thus yielding the steps for the backward recursion of the Algorithm \ref{alg} at time step $T$. Now, the minimum value of the problem in \eqref{eqn:subproblem_N} is given by $\E_{\pi(\bv{x}_{T-1})}\left[\hat{r}_{T-1}(\bv{X}_{T-1})\right]$. Hence, the cost of Problem \ref{prob:problem_merge} becomes 

\begin{equation*}
\DKL\left(\pi_{0:T-1}\mid\mid p_{0:T-1}\right) - \sum_{k=1}^{T-1}\E_{\pi(\bv{x}_{k-1})}\left[\tilde{r}_k(\bv{X}_{k-1})\right]  + \E_{\pi(\bv{x}_{T-1})}\left[\hat{r}_{T-1}(\bv{X}_{T-1})\right]. 
\end{equation*}

Then, following the same reasoning used to obtain \eqref{eqn:cost_split} and by noticing that 

\begin{equation*}
\E_{\pi(\bv{x}_{T-1})}\left[\hat{r}_{T-1}(\bv{X}_{T-1})\right] = \E_{\pi(\bv{x}_{T-2})}\left[\E_{\pi(\bv{x}_{T-1}\mid\bv{x}_{T-2})}\left[\hat{r}_{T-1}(\bv{X}_{T-1})\right]\right], 
\end{equation*}
we get:
\begin{equation*}
\DKL\left(\pi_{0:T-2}\mid\mid p_{0:T-2}\right)  - \sum_{k=1}^{T-2}\E_{\pi(\bv{x}_{k-1})}\left[\tilde{r}_k(\bv{X}_{k-1})\right]  + \E_{\pi(\bv{x}_{T-2})}\left[c_{T-1}(\pi(\bv{x}_{T-1}\mid\bv{x}_{T-2}))\right],
\end{equation*}
where $c_{T-1}(\pi(\bv{x}_{T-1}\mid\bv{x}_{T-2}))$ is again given in \eqref{eqn:cost_step} with $\bar{r}_{T-1}(\bv{x}_{T-1})$ again defined as in Algorithm \ref{alg}. 
By iterating the arguments above, we find that at each time step Problem \ref{prob:problem_merge} can always be split as the sum of two sub-problems, where the last sub-problem can be solved independently on the previous ones. Moreover, the minimizer of this last sub-problem is always the solution of a problem of the form
\begin{equation}\label{eqn:subproblem}
    \begin{aligned}
    \underset{\boldsymbol{\alpha}_k}{\text{min}}
    &\DKL\left(\pi(\bv{x}_k\mid\bv{x}_{k-1})\mid\mid p(\bv{x}_k\mid\bv{x}_{k-1})\right) - \E_{\pi(\bv{x}_k\mid\bv{x}_{k-1})}\left[\bar{r}_k(\bv{X}_{k})\right]\\
    \mbox{s.t.} & \ \mathbb{E}_{\pi(\bv{x}_k\mid\bv{x}_{k-1})}\left[\mathds{1}_{\mathcal{X}_k}(\bv{x}_k)\right]\geq 1-\epsilon_k,\\
    & \ \pi(\bv{x}_k\mid\bv{x}_{k-1}) = \sum_{i\in\sS}\alpha_k^{(i)}\policy{\bv{x}_k\mid\bv{x}_{k-1}}{(i)}, \\
    & \sum_{i\in\sS}\alpha_k^{(i)} = 1, \ \ \alpha_k^{(i)} \in [0, 1],
    \end{aligned}
\end{equation}
where $\bar{r}_k(\bv{x}_k) := r_k(\bv{x}_k) - \hat{r}_k(\bv{x}_k)$, with $\hat{r}_k(\bv{x}_k) := c_{k+1}({\pi}^\ast(\bv{x}_{k+1}\mid\bv{x}_{k}))$ and $\pi^\ast(\bv{x}_k\mid\bv{x}_{k-1})=\sum_{i\in\mathcal{S}}\boldsymbol{\alpha}^{(i),\ast}_{k}(\bv{x}_{k-1})\pi^{(i)}(\bv{x}_k\mid\bv{x}_{k-1})$. This yields the desired conclusions.
\end{proof}
\begin{Remark}\label{rem:comparison}
\textcolor{black}{the results from \cite{russo2020crowdsourcing,designof}  solve an approximate version of Problem \ref{prob:problem_merge} when there are no box constraints. Hence, even in this special case, the results from \cite{russo2020crowdsourcing,designof} do not lead to the optimal  solution found with Algorithm \ref{alg}. Specifically, the approximate solution from \cite{russo2020crowdsourcing,designof} corresponds to the optimal solution of a problem with additional binary constraints on the decision variables. As a result, the algorithm from \cite{russo2020crowdsourcing,designof} searches solutions in a  feasibility domain that is contained in the feasibility domain of Problem \ref{prob:problem_merge}. Hence, the solutions found in \cite{russo2020crowdsourcing,designof} cannot achieve a better cost than the ones obtained via  Algorithm \ref{alg}.}
\end{Remark}

\section{Designing an Intelligent Parking System}\label{sec:application}

We now use Algorithm \ref{alg} to design an intelligent parking system for {connected cars} and validate the results via in-silico  and in-vivo experiments. For the latter set of experiments, Algorithm \ref{alg} is deployed on a real car and  validation is performed via an hardware-in-the-loop (HiL) platform inspired from \cite{Griggs2019}. Before reporting the results, we describe the validation scenarios and the experimental set-up. Code, maps and parameters with instructions to replicate the simulations are given at \url{https://tinyurl.com/3ep4pknh}. 

\begin{figure}[b!]
    \centering
    \includegraphics[width=0.6\columnwidth]{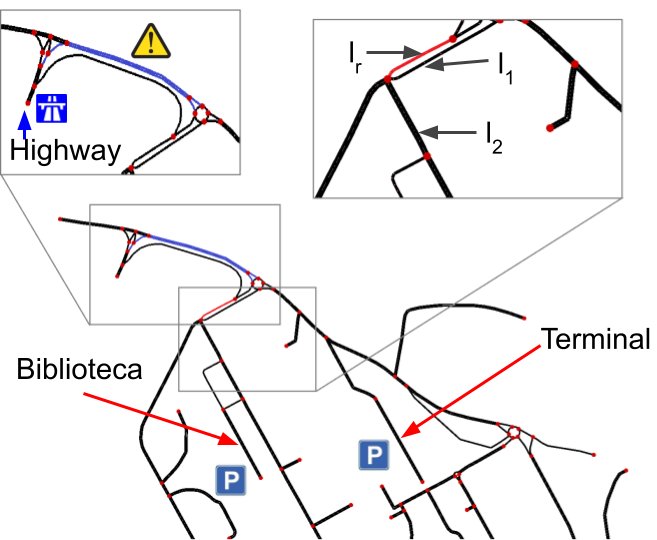}
    \caption{campus map. The magnified areas show the obstructed road link (in blue) and links used within the validations. Colors online.}
    \label{fig:unisa_map}
\end{figure}

\subsection{Validation Scenarios and Experimental Set-up}

We consider the problem of routing vehicles in a given geographic area to find parking. In all experiments we consider a morning rush scenario at the University of Salerno campus (see Figure \ref{fig:unisa_map}). Specifically, cars arrive to the campus through a highway exit and, from here, users seek to park in one of the parking locations: {\em Biblioteca} and {\em Terminal}.

In this context, vehicles are agents equipped with Algorithm \ref{alg}. The set of road links within the campus is $\mathcal{X}$ and time steps are associated to the time instants when the vehicle changes road link. The state of the agent, $x_k$, is the road link occupied by the vehicle  at time step $k$. Given this set-up, at each $k$ Algorithm \ref{alg} outputs the turning probability for the car given the current car link,  $\pi^{\ast}({x}_k\mid {x}_{k-1})$. The next direction for the car is then obtained by sampling from this pf. Agents have access to a set of sources, each providing different routes.  As discussed in \cite{crawling}, sources might be third parties navigation services, routes collected from other cars/users participating to some sharing service. Agents wish to track their target/desired behavior (driving them to the preferred parking -- Terminal in our experiments) and the reward depends on the actual road conditions within the campus. Links adjacent to a parking lot are assigned a constant reward of: (i) $3.8$ if the parking has available spaces; (ii) {0 when it becomes full}. {Unparked cars already on full parking lots are assigned a target behavior leading them to another parking.} In normal road conditions, the reward for the other links is $0$ and becomes $-20$ when there is an obstruction. \textcolor{black}{In our experiments, the reward was selected heuristically so that it would be: (i) sufficiently penalizing for links affected by obstruction; (ii) encouraging cars to drive towards parking lots with available spaces.} In the first scenario (Scenario $1$) there are no box constraints: this is done to benchmark Algorithm \ref{alg} with \cite{russo2020crowdsourcing,designof}. To this aim, we use the campus map from \cite{crawling} in which \cite{russo2020crowdsourcing,designof} were thoroughly validated via simulations. Then, we show that by introducing box constraints Algorithm \ref{alg} can effectively regulate access of vehicles through selected road links. This is Scenario $2$ and we considered three situations: (A) the road towards the {\em Biblioteca} parking lot is forbidden. To account for this, we set in Algorithm \ref{alg} $\mathcal{X}_k = \mathcal{X}\setminus {{l_2}}$, where the link ${l_2}$ is shown in Figure \ref{fig:unisa_map}, and $\epsilon_k = 0.027$;  (B) the set $\mathcal{X}_k$ as before but now $\epsilon_k = 0.5$; (C) the road towards the {\em Terminal} parking lot is forbidden and in this case we set $\mathcal{X}_k = \mathcal{X}\setminus {l_1}$, $\epsilon_k = 0.027$ (see Figure \ref{fig:unisa_map} for link ${l_1}$). For this last scenario, Algorithm \ref{alg} is validated both in-silico and in-vivo. Next, we describe the simulation and HiL experimental set-ups.

\subsubsection*{Simulation set-up}  simulations were performed in SUMO \cite{SUMO2018}; {see also \cite{crawling} for a description of the pipeline to import maps and traffic demands.} In our simulations, each parking lot can accommodate up to $50$ cars and we  generated the traffic demand so that $100$ cars would arrive on campus at $5$-second intervals. All the cars seek to park and, by doing so, the parking capacity is saturated. Given this setting, we simulated a road obstruction on the main link (in blue in Figure \ref{fig:unisa_map}) from the highway exit to the campus entrance. This was done by restricting, in SUMO, the speed of the link to less than one kilometer per hour. Information on the cars in the simulation are contained in the stand-alone file {\tt agent.npy}. Instead, the pfs associated with the sources (described below)  are all stored in {\tt behaviors.npy}. 

\subsubsection*{HiL set-up} the platform embeds a real vehicle into a SUMO simulation. By doing so, performance of the algorithm deployed on a real car can be assessed under arbitrary synthetic traffic conditions generated via SUMO. The high-level architecture of the platform is shown in Figure \ref{fig:HIL_archi}. The platform creates an avatar of the real car in the SUMO simulation. Namely, as shown in Figure \ref{fig:HIL_archi}, the position of the real car is embedded in SUMO by using a standard smartphone to collect its GPS coordinates. These coordinates are then sent via bluetooth to a computer, also hosted on the car in the current  implementation. The connection is established via an off-the-shelf app, which writes the coordinates in a {\tt rfcomm} file. By using the {\tt pySerial} library, an interface was developed to read data in Python. Here, a script was designed leveraging {\tt pynmeaGPS} to translate the data in the {\tt NMEA} format for longitude/latitude coordinates. With this data format, a Python script was created to place the avatar of the real car in the position given by the coordinates. A GUI is also included to highlight the trajectory of the real car on the map and an off-the-shelf text-to-speech routine is used to provide audio feedback to the driver on the vehicle.

\begin{figure}[b!]
    \centering
    \includegraphics[width=0.6\linewidth]{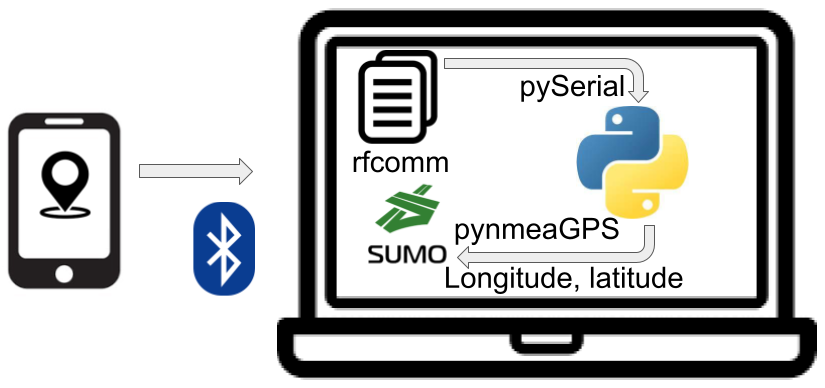}
    \caption{HiL functional architecture.}
    \label{fig:HIL_archi}
\end{figure}

\subsection{In-car Implementation of the Algorithm}\label{sec:incar}
For both the in-silico and in-vivo experiments, Algorithm \ref{alg} was implemented in Python as a stand-alone class so that each car equipped with the algorithm could function as a stand-alone agent. The class has methods accepting all the input parameters of Algorithm \ref{alg} and providing as output the car behavior computed by the algorithm. The optimization problems within the algorithm loop were solved via the Python library {\tt scipy.optimize}. Additionally, the class also implements  methods to compute the cost and to support receding horizon implementations of Algorithm \ref{alg}. In our experiments, \textcolor{black}{we used this receding horizon implementation:} the width of the receding horizon window was $T=5$ and every time the car entered in a new link/state computations were triggered. \textcolor{black}{The pfs from the sources in our experiments were such that, at each $k$, feasibility of the problem was ensured (see Lemma \ref{lem:feasibility})}. Following \cite{crawling}, we also implemented an utility function that restricts calculations of the agent only to the road links that can be reached in $T$ time steps (rather than through the whole state space/map). With this feature, \textcolor{black}{in our experiments the algorithm  took on average approximately half a second to output a behavior, less than the typical time taken to drive through a road link}.\footnote{this duration was measured between the moment where the GUI shows the car merging on a new link and the moment where new directions are displayed. The simulation was run on a modern laptop.} Finally, the pfs of the sources were obtained via built-in routing functions in SUMO and we added noise to the routes so that Assumption \ref{asn:int_def} would be fulfilled \textcolor{black}{(for each road link, $\sS(\pi)$ is the set of outgoing links).} See our github for the details.


\subsection{Experimental Results}


\subsubsection*{Simulation results}
first, we benchmarked the performance obtained by Algorithm \ref{alg}  against these from the algorithm in \cite{russo2020crowdsourcing,designof}, termed as crowdsourcing algorithm in what follows. To this aim, we considered Scenario $1$ and performed two sets of $10$ simulations. In the first set of experiments, Algorithm \ref{alg} was used to determine the behavior of cars on campus (note that Assumption \ref{asn:bounded} is fulfilled). In the second set of simulations, the cars instead used the crodwourcing algorithm. Across the simulations, we recorded the number of cars that the algorithms were able to park. The results are illustrated in Figure \ref{fig:unparked_proba_plot} \textcolor{black}{(top panel)}. The figure shows that the crowdsourcing algorithm was not able to park all the cars within the simulation. This was instead achieved by Algorithm \ref{alg}, which outperformed the algorithm from \cite{russo2020crowdsourcing,designof}. To further quantify the performance, we also computed the average time spent by a car looking for a parking space after it enters the simulation (ATTP: average time-to-parking). Across the simulations, the ATTP for the algorithm in \cite{russo2020crowdsourcing} was of $224.74 \pm 19.67$, while for Algorithm \ref{alg} it was of $151.32 \pm 30.59$ (first quantities are means, second quantities are  standard deviations). That is, Algorithm \ref{alg} yielded an average improvement of $32.7\%$ in the ATTP. Then, we simulated the three cases of Scenario $2$ to verify that Algorithm \ref{alg} can  effectively regulate access through specific links. The constraints for the three cases of Scenario $2$ were given as an input to the algorithm and in Figure \ref{fig:unparked_proba_plot} \textcolor{black}{(bottom panel)} the optimal solution $\pi^{\ast}({x}_k\mid x_{k-1}=l_r)$ is shown. The figure shows that the optimal solution indeed fulfills the constraints (the road link $l_r$ is  in Figure \ref{fig:unisa_map}).

\begin{figure}[b!]
    \centering
    \includegraphics[width=0.6\columnwidth]{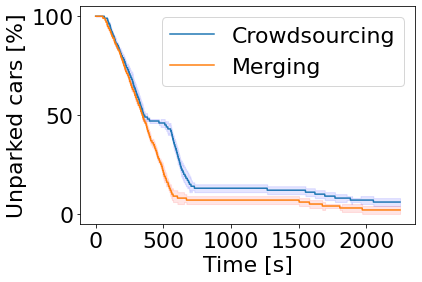} 
    \includegraphics[width=0.6\columnwidth]{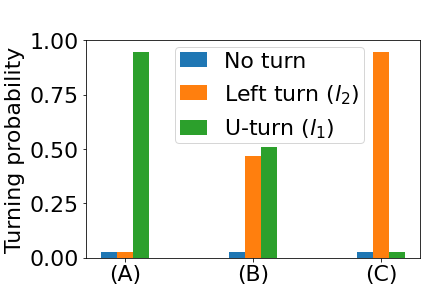}
    \caption{Top: unparked cars over time for crowdsourcing and Algorithm \ref{alg}. Solid lines are means across the simulations, shaded areas are confidence intervals (one standard deviation). Bottom: $\pi^{\ast}({x}_k\mid x_{k-1}=l_r)$  for the three cases of Scenario $2$. The pfs satisfy the constraints. Link definitions in Figure \ref{fig:unisa_map}.}
    \label{fig:unparked_proba_plot}
\end{figure}

\subsubsection*{HiL results} we deployed Algorithm \ref{alg} on a real vehicle using the HiL platform and validated its effectiveness in Scenario $2$ (C): the target behavior of the agent would make the real car reach the {\em Terminal} parking but this route is forbidden. What we observed in the experiment was that, once the car entered in the campus, this was re-routed towards the {\em Biblioteca} parking. The re-routing was an effect of Algorithm \ref{alg} computing the rightmost pf in Figure \ref{fig:unparked_proba_plot} \textcolor{black}{(bottom panel)}. A video of the HiL experiment is available on our github. The video shows that the algorithm is suitable for real car operation: it would run smoothly during the drive, providing feedback to the driver on the vehicle. Figure \ref{fig:HIL} shows the car's route recorded during the experiment.

\begin{figure}[b!]
    \centering
    \includegraphics[width=0.5\columnwidth]{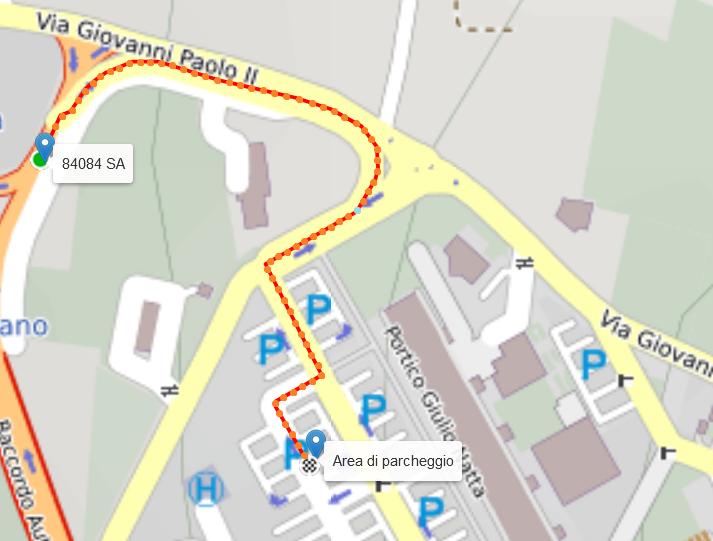}\\
    \caption{Route of the real vehicle. The continuous line shows the  GPS position during the HiL experiment (map from OpenStreetMaps).}
    \label{fig:HIL}
\end{figure}

\section{Conclusions}

We considered the problem of designing agents able to compute optimal decisions by re-using data from multiple sources to solve tasks involving: (i) tracking a desired behavior while minimizing an agent-specific cost; (ii) satisfying certain safety  constraints. After formulating the control problem, we showed that this is convex under a mild condition and computed the optimal solution. We turned the results in an algorithm and used it to design an intelligent parking system. We  evaluated the algorithm via in-silico and in-vivo experiments with real vehicles/drivers. All experiments confirmed the effectiveness of the algorithm and its suitability for in-car operation. Besides \textcolor{black}{considering the use of other divergences in the cost  and deploying our results in more complex urban scenarios that would  use data from pedestrians and sensors on-board vehicles}, our future research will involve devising mechanisms for the composition of policies for the tasks with actuation constraints in \cite{gagliardi2020probabilistic}. 

\bibliographystyle{IEEEtran} 
\bibliography{bibliography}

\begin{thebibliography}{10}
\providecommand{\url}[1]{#1}
\csname url@rmstyle\endcsname
\providecommand{\newblock}{\relax}
\providecommand{\bibinfo}[2]{#2}
\providecommand\BIBentrySTDinterwordspacing{\spaceskip=0pt\relax}
\providecommand\BIBentryALTinterwordstretchfactor{4}
\providecommand\BIBentryALTinterwordspacing{\spaceskip=\fontdimen2\font plus
\BIBentryALTinterwordstretchfactor\fontdimen3\font minus
  \fontdimen4\font\relax}
\providecommand\BIBforeignlanguage[2]{{%
\expandafter\ifx\csname l@#1\endcsname\relax
\typeout{** WARNING: IEEEtran.bst: No hyphenation pattern has been}%
\typeout{** loaded for the language `#1'. Using the pattern for}%
\typeout{** the default language instead.}%
\else
\language=\csname l@#1\endcsname
\fi
#2}}

\bibitem{doi:10.1126/science.aab3050}
B.~M. Lake \emph{et~al.}, ``Human-level concept learning through probabilistic
  program induction,'' \emph{Science}, vol. 350, no. 6266, pp. 1332--1338,
  2015.

\bibitem{sharing_eco}
E.~Crisostomi \emph{et~al.}, \emph{Analytics for the sharing economy:
  Mathematics, Engineering and Business perspectives}.\hskip 1em plus 0.5em
  minus 0.4em\relax Springer, 2020.

\bibitem{russo2020crowdsourcing}
G.~Russo, ``On the crowdsourcing of behaviors for autonomous agents,''
  \emph{IEEE Cont. Sys. Lett.}, vol.~5, pp. 1321--1326, 2020.

\bibitem{designof}
{\'E}.~Garrab{\'e} and G.~Russo, ``On the design of autonomous agents from
  multiple data sources,'' \emph{IEEE Cont. Sys. Lett.}, vol.~6, pp. 698--703,
  2021.

\bibitem{https://doi.org/10.48550/arxiv.2211.05536}
\BIBentryALTinterwordspacing
L.~Li, C.~De~Persis, P.~Tesi, and N.~Monshizadeh, ``Data-based transfer
  stabilization in linear systems,'' 2022. [Online]. Available:
  \url{https://arxiv.org/abs/2211.05536}
\BIBentrySTDinterwordspacing

\bibitem{8795639}
J.~Coulson, J.~Lygeros, and F.~Dörfler, ``Data-enabled predictive control: In
  the shallows of the {DeePC},'' in \emph{European Control Conference}, 2019,
  pp. 307--312.

\bibitem{behav2}
H.~J. van Waarde, J.~Eising, H.~L. Trentelman, and M.~K. Camlibel, ``Data
  informativity: a new perspective on data-driven analysis and control,''
  \emph{IEEE Trans. Automatic Control}, vol.~65, pp. 4753--4768, 2020.

\bibitem{8933093}
C.~{De Persis} and P.~{Tesi}, ``Formulas for data-driven control:
  Stabilization, optimality, and robustness,'' \emph{IEEE Trans. Automatic
  Control}, vol.~65, pp. 909--924, 2020.

\bibitem{9763859}
F.~Celi and F.~Pasqualetti, ``Data-driven meets geometric control: Zero
  dynamics, subspace stabilization, and malicious attacks,'' \emph{IEEE Cont.
  Sys. Lett.}, vol.~6, pp. 2569--2574, 2022.

\bibitem{8039204}
U.~{Rosolia} and F.~{Borrelli}, ``Learning model predictive control for
  iterative tasks. {A} data-driven control framework,'' \emph{IEEE Trans.
  Automatic Control}, vol.~63, pp. 1883--1896, 2018.

\bibitem{DBLP:conf/l4dc/WabersichZ20}
K.~P. Wabersich and M.~N. Zeilinger, ``Bayesian model predictive control:
  Efficient model exploration and regret bounds using posterior sampling,''
  ser. Proc. of ML Research, vol. 120, 2020, pp. 455--464.

\bibitem{9109670}
J.~Berberich, J.~Kohler, M.~A. Muller, and F.~Allgower, ``Data-driven model
  predictive control with stability and robustness guarantees,'' \emph{IEEE
  Trans. Aut. Contr.}, vol.~66, pp. 1702--1707, 2021.

\bibitem{8703172}
G.~Baggio, V.~Katewa, and F.~Pasqualetti, ``Data-driven minimum-energy controls
  for linear systems,'' \emph{IEEE Cont. Sys. Lett.}, vol.~3, pp. 589--594,
  2019.

\bibitem{GARRABE202281}
Émiland Garrabé and G.~Russo, ``Probabilistic design of optimal sequential
  decision-making algorithms in learning and control,'' \emph{Annual Rev, in
  Control}, vol.~54, pp. 81--102, 2022.

\bibitem{9029512}
N.~Cammardella, A.~Busic, Y.~Ji, and S.~P. Meyn, ``{Kullback-Leibler-Quadratic}
  optimal control of flexible power demand,'' in \emph{IEEE 58th Conference on
  Decision and Control}, 2019, pp. 4195--4201.

\bibitem{books/daglib/0016248}
N.~Cesa-Bianchi and G.~Lugosi, \emph{Prediction, learning, and games.}\hskip
  1em plus 0.5em minus 0.4em\relax Cambridge University Press, 2006.

\bibitem{7106543}
M.~Cutler, T.~J. Walsh, and J.~P. How, ``Real-world reinforcement learning via
  multifidelity simulators,'' \emph{IEEE Trans. on Robotics}, vol.~31, pp.
  655--671, 2015.

\bibitem{Mountcastle97}
V.~B. Mountcastle, ``\BIBforeignlanguage{eng}{The columnar organization of the
  neocortex.}'' \emph{\BIBforeignlanguage{eng}{Brain}}, vol. 120, pp. 701--722,
  Apr 1997.

\bibitem{Griggs2019}
W.~Griggs \emph{et~al.}, \emph{A Vehicle-in-the-Loop Emulation Platform for
  Demonstrating Intelligent Transportation Systems}.\hskip 1em plus 0.5em minus
  0.4em\relax Cham: Springer International Publishing, 2019, pp. 133--154.

\bibitem{crawling}
\BIBentryALTinterwordspacing
{\'E}.~Garrabé and G.~Russo, ``{CRAWLING}: a {C}rowdsourcing {A}lgorithm on
  {W}heels for {S}mart {P}arking,'' 2022, preprint submitted to Scientific
  Reports. [Online]. Available: \url{https://arxiv.org/abs/2212.02467}
\BIBentrySTDinterwordspacing

\bibitem{SUMO2018}
Pablo Alvarez Lopez and others,
  ``Microscopic traffic simulation using {SUMO},'' in
  \emph{21st IEEE International Conference on Intelligent Transportation Systems},
  2018, pp. 2575--2582.

\bibitem{gagliardi2020probabilistic}
D.~Gagliardi and G.~Russo, ``On a probabilistic approach to synthesize control
  policies from example datasets,'' \emph{Automatica}, vol. 137, p. 110121,
  2022.

\end{thebibliography}

\section*{Appendix}
We report here an investigation of how the time taken by Algorithm \ref{alg} changes as a function of the number of sources, $S$, and time horizon $T$.
This time is a crucial aspect to investigate whether the approach we propose would scale to more complex urban scenarios than the one presented in Section \ref{sec:application}, which would e.g., include more parking locations (note that these are seen as links by Algorithm \ref{alg}) and more complex road networks together with more sources that the agent could use to determine its optimal behavior. The time Algorithm \ref{alg} takes to output a behavior depends on the number of available sources, the time horizon $T$ and the number of links accessible to the car within the time horizon. We analyze the computation time w.r.t. the\\
To investigate Algorithm \ref{alg}'s computation time, we considered the same implementation as in Section  \ref{sec:incar} and ran the algorithm by varying the receding horizon window between $0$ and $5$ time steps, and the number of sources available to the agent between $1$ and $6$. For this, additional sources were taken from \cite{crawling}, where more sources were used to implement the algorithm from \cite{designof}. For each pair of these parameters, we measured the time taken by Algorithm \ref{alg} to output a behavior, by running the algorithm over each link in the network on a standard computer and taking the average of such times. This gives a fair estimate of the average runtime, as the amount of states considered depends on the density of the connections in the neighborhood of each link. 

The results of this numerical investigation are shown in Figure \ref{fig:3D}. The figure shows that the highest computation time is about one second, which appears to be suitable for our reference application. 

\begin{figure}
    \centering
    \includegraphics[width=0.5\columnwidth]{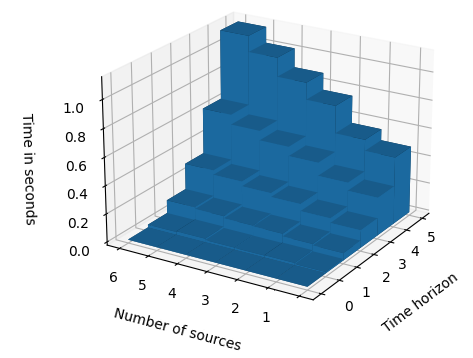}
    \caption{Computation time as a function of time horizon and number of sources}
    \label{fig:3D}
\end{figure}


\end{document}